\numberwithin{equation}{section}
\newtheorem{theorem}{Theorem}[section]
\newtheorem{proposition}[theorem]{Proposition}
\theoremstyle{definition}
\newtheorem{definition}{Definition}
\newtheorem{remark}{Remark}
\begin{document}
\title[Removable singularities of plurisubharmonic functions]{Complete pluripolar sets and removable singularities of plurisubharmonic functions}

\author{Xieping Wang}
\address{CAS Wu Wen-Tsun Key Laboratory of Mathematics and School of Mathematical Sciences, University of Science and Technology of China, Hefei 230026, Anhui, People's Republic of China}
\email{xpwang008@ustc.edu.cn}

\thanks{The author was partially supported by NSFC grants 12001513 and 12371083, and the Fundamental Research Funds for the Central Universities.}

\subjclass[2020]{32D15, 32D20, 32U30, 32U05}
\keywords{Removable singularities, plurisubharmonic functions, complete pluripolar sets}

\dedicatory{To Li and Rui'an}

\begin{abstract}
Inspired by Chen--Wu--Wang (Math. Ann. 362: 305--319, 2015), we prove a Hartogs type extension theorem for plurisubharmonic functions across a compact complete pluripolar set, which is complementary to a classical theorem of Shiffman.
\end{abstract}
\maketitle

\section{Introduction}
Let $\Omega$ be a domain in $\mathbb C^n$, $n\geq 2$, and let $K$ be a compact subset of $\Omega$ such that  $\Omega\!\setminus\!K$ is connected. The famous Hartogs extension theorem asserts that every holomorphic function on $\Omega\!\setminus\!K$ extends holomorphically to the whole domain $\Omega$. An analogue for pluriharmonic functions is also valid, as recently discovered by Chen in \cite{Chen} (see also \cite{Wang}). When it comes to  plurisubharmonic (psh for short) functions the situation, however, is quite different; for instance, every bounded domain in $\mathbb C^n$ with smooth boundary is the domain of existence of a psh function (see \cite{Bed-Tay_subext}). Thus it is meaningful to prove the following

\begin{theorem}\label{thm:Hartogs_PSH-domain}
Let $\Omega$ be a domain in ${\mathbb C}^n$, $n\geq 2$, and let $K$ be a compact complete pluripolar subset of $\Omega$. Then every psh function on $\Omega\!\setminus\!K$ admits a unique psh extension to $\Omega$.
\end{theorem}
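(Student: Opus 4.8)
The plan is to reduce the theorem, via the operator $dd^{c}$, to the already available Hartogs‑type extension theorem for pluriharmonic functions \cite{Chen}, using extension theory for positive closed currents as the bridge.

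Uniqueness is immediate: a pluripolar set is Lebesgue null, so two psh functions on $\Omega$ agreeing off $K$ agree almost everywhere, hence everywhere (each equals the a.e.\ limit of its averages over shrinking balls). For existence it suffices to produce a psh function on some neighbourhood of $K$ coinciding with $u$ off $K$; glued with $u$ on $\Omega\!\setminus\!K$ this is automatically consistent and therefore psh on all of $\Omega$. Fix a psh function $\psi$ on $\Omega$ with $\psi^{-1}(-\infty)=K$ and a relatively compact neighbourhood $\Omega_{0}\Subset\Omega$ of $K$, and for $c>0$ put $\Omega_{c}:=\{z\in\Omega_{0}:\psi(z)<-c\}$. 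Since $K$ is compact and $\psi$ is upper semicontinuous, for all large $c$ the closure $\overline{\Omega_{c}}$ is a compact subset of $\Omega_{0}$ with $\partial\Omega_{c}\cap K=\varnothing$; moreover $-\log(-c-\psi)$ is a psh exhaustion of $\Omega_{c}$, so each $\Omega_{c}$ is pseudoconvex. These sets form a neighbourhood basis of $K$.

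The heart of the argument is to prove that the positive closed $(1,1)$‑current $T:=dd^{c}u$, a priori defined only on $\Omega\!\setminus\!K$, has locally finite mass near every point of $K$; here both hypotheses on $K$ are used. One estimates the mass of $T$ over $\{\psi\le -c-1\}$ by pairing it with a cut‑off of the form $\chi(\psi)$ chosen so that $d\chi$ is supported in the compact shell $\{-c-1\le\psi\le -c\}$; this shell misses $K$, and on it $u$ is an honest psh function, hence bounded above and locally integrable, so the resulting interior term is controlled. What is left is a boundary contribution — a flux of $u$ through shrinking neighbourhoods of $K$ — which, by positivity of $T$, is monotone in the size of the neighbourhood and must be shown bounded below; ruling out its divergence to $-\infty$ is where a Hartogs‑type (continuity‑principle) argument in $\mathbb{C}^{n}$, $n\ge 2$, in the spirit of Chen–Wu–Wang, is needed, exploiting that $K$ is compact and globally cut out by $\psi$. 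I expect this finiteness‑of‑mass step to be the main obstacle.

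Granting it, the El Mir–Skoda extension theorem for positive closed currents across \emph{complete} pluripolar sets — the precise place where completeness of $K$ is invoked — implies that the trivial extension $\widetilde{T}$ of $T$ across $K$ is a positive closed $(1,1)$‑current on $\Omega$. On a pseudoconvex neighbourhood $\Omega_{c}$ of $K$ one writes $\widetilde{T}=dd^{c}g$ with $g$ psh (its Bott–Chern class, already trivial on $\Omega_{c}\!\setminus\!K$, is trivial). Then $dd^{c}(u-g)=T-\widetilde{T}=0$ on $\Omega_{c}\!\setminus\!K$, so $u-g$ is pluriharmonic there, and since $K$ is compact, $\Omega_{c}\!\setminus\!K$ connected and $n\ge 2$, Chen's pluriharmonic extension theorem \cite{Chen} extends $u-g$ to a pluriharmonic function $h$ on $\Omega_{c}$. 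Finally $g+h$ is psh on $\Omega_{c}$ and agrees with $u$ off $K$, which yields the required extension; uniqueness was noted above.
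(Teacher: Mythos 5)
Your route is essentially the one the paper uses for its Theorem \ref{thm:Hartogs_PSH-manifold} (the Stein--manifold version), not the one it uses for Theorem \ref{thm:Hartogs_PSH-domain}, and as written it has two genuine gaps. The first is the one you flag yourself: finiteness of the mass of $T=dd^c u$ near $K$. The integration by parts against $\chi(\psi)$ cannot be performed directly, because $\chi(\psi)\equiv 1$ on a neighbourhood of $K$ and so is not compactly supported in $\Omega\!\setminus\!K$, where $T$ lives; truncating from inside produces exactly the ``flux'' term you mention, which pairs $u$ (unbounded below near $K$) against second derivatives of a cut-off that have no sign, and you supply no argument controlling it. The paper closes precisely this gap with Sibony's result (Proposition \ref{prop:finite-mass}): for the relative extremal function $u_K$ of $K$ in a strongly pseudoconvex neighbourhood, $u_KT$ has finite mass near $K$ for purely local reasons, and complete pluripolarity of $K$ forces $u_K=\chi_{\Omega\setminus K}$, whence $T$ itself has finite mass. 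Something of this kind is indispensable. The second gap is the global potential $g$ for $\widetilde T$ on $\Omega_c$: this requires the de Rham class of $\widetilde T$ in $H^2(\Omega_c,\mathbb R)$ to vanish, and its triviality on $\Omega_c\!\setminus\!K$ does not imply triviality on $\Omega_c$, since the restriction $H^2(\Omega_c,\mathbb R)\to H^2(\Omega_c\!\setminus\!K,\mathbb R)$ need not be injective. This is exactly why Theorem \ref{thm:Hartogs_PSH-manifold} carries the hypothesis $H^2(\Omega,\mathbb R)=0$, and why the paper's Remark singles that condition out as the one whose removal in $\mathbb C^n$ is only known through a \emph{different} proof.

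That different proof avoids currents entirely, and is what you would need to reproduce to get Theorem \ref{thm:Hartogs_PSH-domain} in full: after reducing to a bounded pseudoconvex $\Omega$ via Col\c{t}oiu's defining function, the paper shows $\Omega\!\setminus\!K$ is complete K\"ahler (Theorem \ref{thm:complete-Kahler}) and applies the Ohsawa--Takegoshi type theorem of Chen--Wu--Wang to produce, for each $z$, a holomorphic $f_z$ on $\Omega\!\setminus\!K$ with $f_z(z)=e^{\varphi(z)/2}$ and $\int_{\Omega\setminus K}|f_z|^2e^{-\varphi}$ uniformly bounded; each $f_z$ extends across $K$ by the classical Hartogs theorem, and a Shiffman-type Cauchy estimate on a Hartogs figure around a point of $K$ (using that $K$ has Hausdorff dimension at most $2n-2$) bounds $e^{\varphi(z)}=|f_z(z)|^2$ by $\sup e^{\varphi}$ over a compact shell missing $K$. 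This gives the local upper bound for $\varphi$ near $K$, which is all that is needed. To salvage your current-theoretic argument you must both import Sibony's mass estimate and genuinely dispose of the $H^2$ obstruction; neither is done in the proposal.
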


This somewhat surprising result is complementary to a classical theorem of Shiffman \cite{Shiffman72} that every psh function on a domain in $\mathbb C^n$ extends plurisubharmonically across a closed set of Hausdorff $(2n-2)$-measure zero. To explain this, we only need to recall a standard fact in pluripotential theory that pluripolar sets in $\mathbb C^n$ have Hausdorff dimension at most $2n-2$, and  mention the existence of compact complete pluripolar sets with positive (finite or infinite) Hausdorff $(2n-2)$-measure in $\mathbb C^n$; see Section \ref{sect:Complete pluripolar sets} for precise references. It is also worth noting that Theorem \ref{thm:Hartogs_PSH-domain} is global in nature,  whereas  Shiffman's theorem (as well as  many other related results in the literature; see \cite{Siu75, Harvey_CPAM, CWW}, etc.) is local in nature; and the conclusion of Theorem \ref{thm:Hartogs_PSH-domain} does not hold in general when $n=1$ or when the compactness assumption of the complete pluripolar set $K$ is dropped, as easily seen by considering functions of the form $-\log|f|$ on $\Omega\!\setminus\!f^{-1}(0)$,  with $f$ being holomorphic and not identically vanishing on $\Omega$.

The special case of Theorem \ref{thm:Hartogs_PSH-domain} where $\Omega$ is the unit polydisc $\Delta^n\subset\mathbb C^n$ is already contained in the beautiful work of Chen--Wu--Wang \cite{CWW}, who dealt with the more general case of $K$ being a {\it closed} complete pluripolar subset of $\Delta^n$ under certain reasonable conditions. Chen--Wu--Wang proved their result by using an Ohsawa--Takegoshi type extension theorem for a single point in bounded {\it complete K\"ahler} domains, which is also one of the main results in \cite{CWW}  and seems to be highly nontrivial due to its connection with an open problem posed by Ohsawa in \cite{Ohsawa95}. We will prove Theorem \ref{thm:Hartogs_PSH-domain} by combining this powerful result with an idea of Shiffman. As in the proof of Shiffman's theorem mentioned earlier, the point here is also to show that every psh function on  $\Omega\!\setminus\!K$ is actually locally bounded above near $K$, for then the desired plurisubharmonic extendibility follows immediately from a well-known result of Lelong (see also \cite{CWW} for a related result). 

One might naturally ask whether Theorem \ref{thm:Hartogs_PSH-domain} still holds  true when $\mathbb C^n$ is replaced by a generic Stein manifold of dimension $n\geq 2$. Since  it is not clear to us at this moment whether the Ohsawa--Takegoshi type extension theorem by Chen--Wu--Wang applies to this more general case, we instead use other techniques, namely the recently proved Hartogs extension theorem for pluriharmonic functions in \cite{Wang} and the Skoda--El Mir extension theorem for closed positive currents, to prove the following

\begin{theorem}\label{thm:Hartogs_PSH-manifold}
Let $X$ be a Stein manifold of dimension $n\geq 2$. Suppose $\Omega$ is a domain in $X$ such that $H^1(\Omega,\, \mathcal{O})=0$ and $H^2(\Omega,\, \mathbb R)=0$,  and $K$ is a compact complete pluripolar subset of $\Omega$. Then every psh function on $\Omega\!\setminus\!K$ admits a unique psh extension to $\Omega$.
\end{theorem}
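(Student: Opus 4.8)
The plan is to recover $u$ on $\Omega$ from a global Riesz‑type decomposition into a plurisubharmonic potential and a pluriharmonic remainder, extending each piece separately; the two hypotheses on $\Omega$ and the two auxiliary theorems are tailored exactly to this decomposition.

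\emph{Uniqueness and setup.} Since $K$ is complete pluripolar it is in particular a closed pluripolar set, hence has empty interior and does not disconnect $\Omega$, so $\Omega\setminus K$ is dense and connected in $\Omega$. As any two psh functions on $\Omega$ that agree off a pluripolar set agree everywhere (via upper regularization), the extension, if it exists, is unique. For existence, fix $u\in\mathrm{PSH}(\Omega\setminus K)$ and put $T:=dd^{c}u\ge 0$, a closed positive $(1,1)$-current on $\Omega\setminus K$.

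\emph{The construction.} Assume, for the moment, that $T$ has locally finite mass near $K$. Then, $K$ being closed and complete pluripolar, the Skoda--El Mir extension theorem yields that the trivial extension $\widetilde T$ of $T$ is a closed positive $(1,1)$-current on $\Omega$. Using $H^{1}(\Omega,\mathcal O)=0$ and $H^{2}(\Omega,\mathbb R)=0$ (solve $\bar\partial$ for a $(0,1)$-primitive of a real $1$-primitive of $\widetilde T$ and take imaginary parts) one obtains a global $dd^{c}$-potential, i.e.\ a function $\psi\in\mathrm{PSH}(\Omega)$ with $dd^{c}\psi=\widetilde T$ on $\Omega$. On $\Omega\setminus K$ we then have $dd^{c}(u-\psi)=0$, so $u-\psi$ coincides a.e.\ with a pluriharmonic function $h$ there; since $u$ and $\psi+h$ are both psh and agree a.e., in fact $u=\psi+h$ on $\Omega\setminus K$. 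By the Hartogs extension theorem for pluriharmonic functions of \cite{Wang}, available under precisely the present hypotheses on $\Omega$ (and with $\Omega\setminus K$ connected), $h$ extends to a pluriharmonic function $\widetilde h$ on $\Omega$. Then $U:=\psi+\widetilde h\in\mathrm{PSH}(\Omega)$ and $U=u$ on $\Omega\setminus K$, the required extension. (A posteriori this shows $u$ is bounded above near $K$; conversely, once that is known, the classical removability of complete pluripolar sets for locally bounded-above psh functions -- obtained by adding $\varepsilon$ times a psh defining function of $K$ and passing to the increasing limit as $\varepsilon\downarrow 0$ -- already yields the extension directly, so the construction above is essentially the machinery needed to \emph{prove} that boundedness.)

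\emph{The main obstacle.} Everything thus rests on the reduction step, namely that $T=dd^{c}u$ has locally finite mass near $K$, equivalently -- by a Chern--Levine--Nirenberg estimate on generic complex lines together with Fubini -- that $u$ is locally bounded above near $K$. This is exactly where $n\ge 2$ and the \emph{completeness} of $K$ are indispensable: for $n=1$, or for a merely pluripolar compact $K$, the assertion is false, as $\mathrm{Re}(1/z)$ near the origin shows. I would attack it by shrinking to a sublevel set $\{\rho<c_{0}\}\Subset\Omega$ of a psh defining function $\rho$ of $K$ and studying $M(c):=\sup_{\{\rho=c\}}u$ as $c\to-\infty$; the maximum principle on the ``shells'' $\{c'\le\rho\le c\}$ forces a quasi-convexity of $M$, but not an a priori bound, and ruling out $M(c)\to+\infty$ is the crux. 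In the Euclidean case (Theorem~\ref{thm:Hartogs_PSH-domain}) Chen--Wu--Wang \cite{CWW} exclude this via an Ohsawa--Takegoshi extension theorem; in the Stein setting the plan is instead to extract from a hypothetically unbounded $u$ a pluriharmonic function on $\Omega\setminus K$ with a genuine singularity along $K$ -- by extending, via Skoda--El Mir, the closed positive currents attached to suitable truncations of $u$ and peeling off the potential part -- thereby contradicting the pluriharmonic Hartogs theorem of \cite{Wang}. Making this last step rigorous is the technical heart of the argument.
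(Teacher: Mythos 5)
Your architecture is exactly the paper's: extend $T=dd^c u$ across $K$ by Skoda--El Mir, use $H^1(\Omega,\mathcal O)=0$ and $H^2(\Omega,\mathbb R)=0$ to produce a global psh potential $\psi$ of the extended current, observe that $u-\psi$ is pluriharmonic on $\Omega\setminus K$, and extend that piece by the pluriharmonic Hartogs theorem of \cite{Wang}. That part of the proposal is correct. But you have explicitly left unproved the one step on which everything hinges, namely that $T$ has locally finite mass near $K$; you call it ``the technical heart'' and only sketch a plan of attack. That plan, moreover, is circular: you propose to apply Skoda--El Mir to ``currents attached to suitable truncations of $u$,'' but Skoda--El Mir requires precisely the finite-mass input you are trying to establish, and truncations $\max(u,-j)$ are no better controlled near $K$ than $u$ itself. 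Your claimed equivalence between finite mass of $dd^c u$ and local boundedness above of $u$ is also not something you can take for granted, and it is not the route the paper takes.

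The paper closes this gap without ever discussing boundedness of $u$: it proves (Proposition \ref{prop:finite-mass}) that \emph{every} closed positive $(p,p)$-current on $X\setminus K$ with $p\le n-1$ has finite mass near $K$. The mechanism is Sibony's estimate \cite[Proposition 1.4]{Sibony85}, which gives finite mass of $u_K T$ near $K$ for the relative extremal function $u_K$ of $K$ in a strongly pseudoconvex neighborhood $\Omega'\Supset K$ --- valid for an arbitrary compact $K$ --- combined with the observation that complete pluripolarity forces $u_K=\chi_{\Omega'\setminus K}$: the functions $\max\{\rho/t+1,0\}$ built from a global psh defining function $\rho$ of $K$ (Col\c{t}oiu's theorem) are competitors for $u_K$ and tend to $1$ off $K$ as $t\to\infty$. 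Hence $u_KT=T$ on $\Omega'\setminus K$ and the mass bound transfers to $T$ itself. This is where the complete pluripolarity of $K$ enters the Stein-manifold argument, and it is the ingredient your proposal is missing; with it, the rest of your construction goes through as written.
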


\begin{remark}
Since $K$ is holomorphically convex in $X$, there always exists a Stein neighborhood of $K$ contained in a given domain $\Omega\subset X$ so that the assumption $H^1(\Omega,\, \mathcal{O})=0$ is nonessential for the theorem. Also, it seems that the additional condition $H^2(\Omega,\, \mathbb R)=0$ is superfluous (and this is the case at least when $X=\mathbb C^n$, as shown by Theorem \ref{thm:Hartogs_PSH-domain}).
\end{remark}

 It would be quite interesting to know whether Theorems \ref{thm:Hartogs_PSH-domain} and \ref{thm:Hartogs_PSH-manifold} still hold  true when $K$ is only assumed to be  a compact pluripolar subset of $\Omega$.

The paper is organized as follows. In Section \ref{sect:Complete pluripolar sets} we recall a fundamental result concerning closed complete pluripolar sets in Stein manifolds. We then prove Theorems $\ref{thm:Hartogs_PSH-domain}$ and $\ref{thm:Hartogs_PSH-manifold}$ in Sections \ref{sect:Hartogs_PSH-domain} and \ref{sect:Hartogs_PSH-manifold}, respectively.

\medskip
\noindent {\bf Acknowledgements.}
The author would like to thank Professor Bo-Yong Chen for kindly explaining his joint work with Wu and Wang \cite{CWW}, and for his interest in this work. The author would also like to thank Doctors Yong-Xin Gao and Zhi Li for patiently listening to his lectures on the basic parts of pluripotential theory and related topics at the seminar they organized when all three of them were at the Institute of Mathematics, AMSS, Chinese Academy of Sciences from 2017 to 2019. Last but not least, the author is grateful to the anonymous referees for their careful reading of the paper and very accurate comments that helped improve the exposition of the paper.

\section{Complete pluripolar sets and their defining functions}\label{sect:Complete pluripolar sets}

We begin by recalling the definition of complete pluripolar sets, which are our main concern in this paper.  Let $X$ be a complex manifold and $PSH(X)$ denote the set of all psh functions on $X$.

\begin{definition}
A subset $E\subset X$ is called {\it complete pluripolar} if for every point $z\in E$ there exists a neighborhood $U$ of $z$ and a function $\varphi\in PSH(U)$ such that
  $$
  E\cap U=\varphi^{-1}(-\infty).
  $$
\end{definition}

The set of all complex subvarieties of $X$ forms a particularly important class of (closed) complete pluripolar sets, but complete pluripolar sets are much more general: for instance, the Cartesian product of finitely many (possibly distinct) Cantor type sets of logarithmic capacity zero in the complex plane is a {\it compact} complete pluripolar set in the corresponding complex Euclidean space (see, e.g., \cite{Ransfordbook}), but far from complex-analytic. One may also consult \cite{El-Mir_cmp-plolar, LMP_IUMJ, Edlund_cmp-curv, DM_cmp-plolar} and the references therein for many other nontrivial and very interesting examples of {\it compact} complete pluripolar sets in $\mathbb C^n$, especially those of minimal Hausdorff codimension (i.e., of Hausdorff codimension two).

In 1990, Col\c{t}oiu proved the following important result concerning the existence of a global defining function for a closed complete pluripolar set.

\begin{theorem}[see {\cite[Corollary 1]{Coltoiu}}]\label{thm:def-complete-polar}
Let $X$ be a Stein manifold and $E\subset X$ be a closed complete pluripolar set. Then there exists a function $\rho\in PSH(X)\cap C^{\infty}(X\!\setminus\!E)$ such that $\rho^{-1}(-\infty)=E$ and $\sqrt{-1}\partial\bar{\partial}\rho>0$ on $X\!\setminus\!E$.
\end{theorem}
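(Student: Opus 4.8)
The plan is to construct $\rho$ in two stages. First I would produce a \emph{global} defining function, that is, some $u\in PSH(X)$ with $u^{-1}(-\infty)=E$ which is moreover continuous on $X\setminus E$; then I would regularize it and add a smooth strictly psh exhaustion of $X$ to gain smoothness and strict positivity of the Levi form away from $E$.

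Granting the first stage, the second is routine. Since $X$ is Stein it carries a smooth strictly psh exhaustion $\psi$. Applying the standard regularization of psh functions on a complex manifold (convolution of variable radius glued by regularized maxima, in the sense of Richberg and Demailly) on the open manifold $X\setminus E$, I obtain a smooth $v\in PSH(X\setminus E)$ with $u\le v\le u+1$. Extending $v$ by $-\infty$ on $E$ produces a function that is upper semicontinuous, equals $-\infty$ exactly on $E$, is psh on $X\setminus E$, and is locally bounded above near $E$ (since $v\le u+1$ and $u\in PSH(X)$); hence it is psh on all of $X$ by the removability of the closed pluripolar set $E$. Then $\rho:=v+\psi$ is psh on $X$, smooth on $X\setminus E$, satisfies $\rho^{-1}(-\infty)=E$, and obeys $\sqrt{-1}\partial\bar{\partial}\rho=\sqrt{-1}\partial\bar{\partial}v+\sqrt{-1}\partial\bar{\partial}\psi\ge\sqrt{-1}\partial\bar{\partial}\psi>0$ on $X\setminus E$.

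The first stage is where I expect the genuine difficulty. One starts from the local data furnished by complete pluripolarity: a locally finite cover $\{U_j\}$ of $X$ such that, whenever $U_j\cap E\neq\emptyset$, there is $\varphi_j\in PSH(U_j)$ with $\varphi_j^{-1}(-\infty)=E\cap U_j$ (after a local application of the regularization above one may take $\varphi_j$ continuous on $U_j\setminus E$ and $\le 0$), while for $U_j\cap E=\emptyset$ one just sets $\varphi_j:=0$. I would then patch the $\varphi_j$ into one global function by induction over a Stein exhaustion $X_1\Subset X_2\Subset\cdots$, at each step gluing in the next defining function through the classical gluing lemma for psh functions: one replaces the function built so far by its maximum with a downward shift $\varphi_j-C_j$, which on a suitable collar inside $U_j$ coincides with the old function once $C_j$ is taken large. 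The hard part is to choose the shifts so that the construction stabilizes on every compact subset (hence the limit is genuinely psh, not merely an increasing limit of psh functions) while keeping every $-\infty$ set inside $E$, equal to $E$ wherever the local pieces cover $E$, and the whole function $\le 0$, so that $u\not\equiv-\infty$ on $X\setminus E$ and thus $u^{-1}(-\infty)=E$ exactly. The obstruction is concrete: on a collar meeting $E$ one must compare $\varphi_j$ with the function built so far, and neither the difference of two local defining functions with a common pole set nor the difference $\varphi_j-(\text{function so far})$ need be bounded above near $E$; relatedly, one cannot cap a psh function near its pole set by a finite background without destroying the pole, so the global object must be singular along $E$ from the start. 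Resolving this is essentially the content of Col\c{t}oiu's theorem, which refines Josefson's theorem that a locally pluripolar set in a Stein manifold is globally pluripolar; a self-contained argument would reproduce the exhaustion and capacity estimates underlying those results, while the regularization and exhaustion steps above are standard.
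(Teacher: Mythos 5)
The paper does not actually prove this statement: it is quoted verbatim as Col\c{t}oiu's result \cite[Corollary 1]{Coltoiu}, so there is no in-paper argument to compare yours against. Your outline is a fair reconstruction of how such a proof is organized, and you correctly isolate where the real content lies: the passage from the local defining functions $\varphi_j$ to a single global $u\in PSH(X)$ with $u^{-1}(-\infty)=E$ is precisely Col\c{t}oiu's theorem (refining Josefson's local-to-global pluripolarity), and your proposal ultimately defers to it rather than establishing it. So what you have is a correct reduction of the smooth, strictly psh version to the continuous global version, not a self-contained proof --- which is an honest match for what the paper itself does by citing Col\c{t}oiu.

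Two technical points in the part you do carry out. First, Richberg--Demailly regularization with the two-sided bound $u\le v\le u+1$ requires the input to be \emph{strictly} psh: the regularized-maximum gluing needs strict positivity of the Levi form to absorb the errors from the local convolutions, and a merely psh continuous function on a general complex manifold need not admit such a smoothing. Note also that $X\!\setminus\!E$ need not be Stein (e.g.\ $\mathbb C^2$ minus a point), so you cannot fall back on Fornaess--Narasimhan there. The fix is to reverse your order of operations: regularize $u+\psi$, which is continuous and strictly psh on $X\!\setminus\!E$, to obtain a smooth strictly psh $w$ with $u+\psi\le w\le u+\psi+1$, and then extend $w$ by $-\infty$ across $E$; upper semicontinuity at points of $E$ follows from $w\le u+\psi+1$ together with the upper semicontinuity of $u$, and plurisubharmonicity of the extension from the removable singularity theorem across closed pluripolar sets, exactly as you argue. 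Second, your parenthetical claim that the local $\varphi_j$ may be taken continuous on $U_j\!\setminus\!E$ ``after a local application of the regularization above'' hides the same difficulty (one must regularize a non-strictly psh function while preserving the pole set $E\cap U_j$); this, too, is part of what Col\c{t}oiu actually has to work for.
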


As we shall see later, this result plays a fundamental role in the proofs of Theorems $\ref{thm:Hartogs_PSH-domain}$ and $\ref{thm:Hartogs_PSH-manifold}$.

\section{Proof of Theorem $\ref{thm:Hartogs_PSH-domain}$}\label{sect:Hartogs_PSH-domain}
The idea of the proof is due to Chen--Wu--Wang \cite{CWW}, which in turn was more or less inspired by the celebrated work of Demailly \cite{Demailly92}. The key ingredient here is an Ohsawa--Takegoshi type extension theorem for a single point in bounded complete K\"ahler domains in $\mathbb C^n$ (see \cite[Theorem 1.3]{CWW} for details). In order to make use of this theorem, we first need to prove the following result:

\begin{theorem}\label{thm:complete-Kahler}
Let $X$ be a Stein manifold and $E\subset X$ be a closed complete pluripolar set. Then $X\!\setminus\!E$ carries a complete K\"{a}hler metric.
\end{theorem}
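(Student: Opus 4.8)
The plan is to build the metric explicitly, following the classical recipe for complements of analytic subsets but replacing the usual $-\log(-\log|f|^{2})$-type potential by a variant manufactured from the global defining function provided by Col\c{t}oiu's theorem (Theorem \ref{thm:def-complete-polar}). So fix $\rho\in PSH(X)\cap C^{\infty}(X\!\setminus\!E)$ with $\rho^{-1}(-\infty)=E$ and $\sqrt{-1}\partial\bar\partial\rho>0$ on $X\!\setminus\!E$, and a smooth strictly psh exhaustion function $\varphi_{0}\colon X\to[0,\infty)$ (available since $X$ is Stein). The naive potential $-\log(-\rho)$ lives only on the neighbourhood $\{\rho<0\}$ of $E$, and on a noncompact $X$ one cannot in general modify $\rho$ so as to make it negative everywhere; this is where a little care is needed. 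Instead I would work with $g:=\tilde\varphi-\rho$, where $\tilde\varphi:=\chi\circ\varphi_{0}$ for a smooth convex increasing $\chi$ chosen large enough that $\tilde\varphi\ge\rho+2$ on $X$ (possible because $\sup_{\{\varphi_{0}\le t\}}\rho<\infty$ for each $t$, so one only has to dominate $t\mapsto 2+\sup_{\{\varphi_{0}\le t\}}\rho$ by a convex increasing function). Then $\tilde\varphi$ is again a smooth strictly psh exhaustion with $\tilde\varphi\ge0$, while $g$ is lower semicontinuous on $X$, smooth and $\ge2$ on $X\!\setminus\!E$, and $g\to+\infty$ along every sequence tending to $E$.

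Next I would set $\Psi:=e^{\tilde\varphi}-\log g$ on $X\!\setminus\!E$, which is smooth there, and compute
\[
\sqrt{-1}\partial\bar\partial\Psi=\sqrt{-1}\partial\bar\partial e^{\tilde\varphi}-\frac{\sqrt{-1}\partial\bar\partial\tilde\varphi}{g}+\frac{\sqrt{-1}\partial\bar\partial\rho}{g}+\frac{\sqrt{-1}\partial g\wedge\bar\partial g}{g^{2}}.
\]
Using $\sqrt{-1}\partial\bar\partial e^{\tilde\varphi}=e^{\tilde\varphi}\bigl(\sqrt{-1}\partial\bar\partial\tilde\varphi+\sqrt{-1}\partial\tilde\varphi\wedge\bar\partial\tilde\varphi\bigr)$, the bound $e^{\tilde\varphi}-g^{-1}\ge 1-\tfrac12=\tfrac12$ (from $\tilde\varphi\ge0$ and $g\ge2$), and $\sqrt{-1}\partial\bar\partial\rho\ge0$, this yields
\[
\sqrt{-1}\partial\bar\partial\Psi\ \ge\ \tfrac12\,\sqrt{-1}\partial\bar\partial\tilde\varphi\ +\ e^{\tilde\varphi}\sqrt{-1}\partial\tilde\varphi\wedge\bar\partial\tilde\varphi\ +\ \frac{\sqrt{-1}\partial g\wedge\bar\partial g}{g^{2}}\ >\ 0
\]
on $X\!\setminus\!E$. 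Hence $\omega:=\sqrt{-1}\partial\bar\partial\Psi$ is a closed, positive, smooth $(1,1)$-form, i.e.\ a K\"ahler metric on $X\!\setminus\!E$; the point of the summand $e^{\tilde\varphi}$ is precisely that the term it produces absorbs the only negative contribution $-g^{-1}\sqrt{-1}\partial\bar\partial\tilde\varphi$.

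Finally I would establish completeness via the standard criterion: it suffices to produce a function $h$ on $X\!\setminus\!E$ whose sublevel sets are compact in $X\!\setminus\!E$ and for which $\|\partial h\|_{\omega}$ is bounded, since then every divergent curve has infinite $\omega$-length (along it $h$ would otherwise be Cauchy, hence bounded). Take $h:=2e^{\tilde\varphi/2}+\log g$. Since $\partial\bigl(2e^{\tilde\varphi/2}\bigr)=e^{\tilde\varphi/2}\partial\tilde\varphi$ and $\partial\log g=\partial g/g$, the displayed lower bound gives both $\omega\ge\sqrt{-1}\partial\bigl(2e^{\tilde\varphi/2}\bigr)\wedge\bar\partial\bigl(2e^{\tilde\varphi/2}\bigr)$ and $\omega\ge\sqrt{-1}\partial\log g\wedge\bar\partial\log g$, whence $\|\partial h\|_{\omega}\le 2$. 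And $h$ is an exhaustion of $X\!\setminus\!E$: $\{h\le c\}$ is closed in $X\!\setminus\!E$ (as $h$ is lower semicontinuous there) and is contained in $\{\tilde\varphi\le 2\log(c/2)\}\cap\{\rho\ge -e^{c}\}$, which is compact in $X$ and disjoint from $E$, hence compact in $X\!\setminus\!E$. This finishes the proof.

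I expect the one genuinely delicate point to be the global positivity of $\sqrt{-1}\partial\bar\partial\Psi$, i.e.\ the bookkeeping in the second step: one must inflate the exhaustion to secure $g\ge2$ and must use $e^{\tilde\varphi}$ rather than $\tilde\varphi$ itself, so that the coefficient $e^{\tilde\varphi}-g^{-1}$ of $\sqrt{-1}\partial\bar\partial\tilde\varphi$ stays bounded away from $0$; once this is arranged, both the K\"ahler property and completeness are routine. (If desired one could first pass to a Stein neighbourhood of $E$ to simplify bookkeeping, but as set up above no such reduction is necessary.)
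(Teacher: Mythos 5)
Your proof is correct, and at its core it follows the same strategy as the paper's: both start from Col\c{t}oiu's defining function $\rho$ (Theorem \ref{thm:def-complete-polar}), add a convex reparametrization of a strictly psh exhaustion to a potential with a $-\log$--type singularity along $E$, and extract completeness from the fact that the resulting metric dominates $\sqrt{-1}\,\partial(\log\textrm{-term})\wedge\bar\partial(\log\textrm{-term})$. Where you genuinely diverge is in the execution, and in both places your version is tidier. First, the paper takes $\psi=-\log(-\rho)$ only on $U=\{\rho<-1\}$, extends it by an unspecified smooth function, and absorbs the resulting error by choosing $\chi$ ``rapidly increasing''; your substitute $g=\tilde\varphi-\rho\ge 2$ makes the logarithmic potential globally defined, with the positivity estimate fully explicit (the coefficient $e^{\tilde\varphi}-g^{-1}\ge\tfrac12$ of $\sqrt{-1}\partial\bar\partial\tilde\varphi$), so no cutoff or unquantified growth condition is needed. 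Second, for completeness the paper argues by contradiction with curves $\gamma_j$ crossing $\partial U$, integrating $d\log(-\rho)$ along them, and separately needs $\omega\ge\omega_0$ for a complete metric $\omega_0$ on $X$ to rule out escape to infinity; you package both phenomena into the single standard criterion ``exhaustion function with $\omega$-bounded differential implies completeness,'' applied to $h=2e^{\tilde\varphi/2}+\log g$, whose two summands control divergence to infinity in $X$ and divergence to $E$ respectively. One small point to record: in your displayed lower bound for $\sqrt{-1}\partial\bar\partial\Psi$ you discard the term $g^{-1}\sqrt{-1}\partial\bar\partial\rho$ and then assert strict positivity, which requires $\sqrt{-1}\partial\bar\partial\tilde\varphi>0$ and hence $\chi'>0$ everywhere (the remaining two terms have rank at most one each); either state that $\chi$ is chosen strictly increasing or keep the $\rho$-term, which is positive on $X\!\setminus\!E$ by Theorem \ref{thm:def-complete-polar}. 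Either fix is one line.
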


The very important special case of $E\subset X$ being a complex subvariety is well-known and is due to Grauert \cite{Grauert_comp-kahler}. Note that even in this special case, $X\!\setminus\!E$ cannot be Stein unless $E$ is empty or purely one-codimensional, in view of the second Riemann extension theorem.

\begin{proof}
The result is an easy consequence of Theorem \ref{thm:def-complete-polar}. Let $\varphi$ be a $C^{\infty}$ strictly psh exhaustion function for $X$ and let
$\psi\!: X\!\setminus\!E\to \mathbb R$ be a $C^{\infty}$ function such that
   $$
   \psi=-\log(-\rho) \quad \mbox{on}\;\; U\!\setminus\!E,
   $$
where $\rho$ is a psh function as in Theorem \ref{thm:def-complete-polar} with $\sup_X\rho>-1$ and $U:=\{\rho<-1\}$. Then one can construct a $C^{\infty}$ convex, rapidly increasing function $\chi$ on $\mathbb R$ such that
   $$
   \omega:=\sqrt{-1}\partial\bar{\partial}(\chi\circ\varphi)+\sqrt{-1}\partial\bar{\partial}\psi\geq \omega_0 \quad \mbox{on}\;\; X\!\setminus\!E
   $$
for some complete K\"{a}hler metric $\omega_0$  on $X$.

We claim that $\omega$ is complete on $X\!\setminus\!E$. For this, we may assume without loss of generality that $X$ itself is connected (and so is $X\!\setminus\!E$). Suppose $\{z_j\}_{j\geq 1}$ is a bounded sequence in the metric space $(X\!\setminus\!E,\, \omega)$. Then there is a sequence of smooth curves $\{\gamma_j\}_{j\geq 1}\subset C^{\infty}([0, 1],\, X\!\setminus\!E)$ with uniformly bounded lengths with respect to $\omega$, joining each $z_j$ to a (fixed) reference point in $X\!\setminus\!\overline{U}$. Since $\omega\geq\omega_0$ on $X\!\setminus\!E$ and $\omega_0$ is complete on $X$, we may assume that the sequence $\{z_j\}_{j\geq 1}$ itself converges in $X$  by passing to a subsequence if necessary. What now remains is to show that the limit of $\{z_j\}_{j\geq 1}$ lies outside $E$. Suppose the contrary and set
   $$
   t_j:=\inf\!\big\{t\in [0,1]\!: \gamma_j([t, 1])\subset U\big\},\quad j\geq 1.
   $$
Clearly $0<t_j<1$ and $\gamma_j(t_j)\in\partial U=\{\rho=-1\}$ for all sufficiently large $j$. Observe also that
   $$
   \omega\geq \sqrt{-1}\partial\bar{\partial}\big(-\log(-\rho)\big)\geq \sqrt{-1}\partial\log(-\rho)\wedge \bar{\partial}\log(-\rho)\quad \mbox{on}\;\; U\!\setminus\!E.
   $$
It then follows that
   \begin{equation*}
      \begin{split}
      \sqrt2\,{\rm length}_{\omega}(\gamma_j)&\geq\int_{t_j}^1\big|\big(d\log(-\rho)\big)(\gamma'(t))\big|\,dt \geq\int_{t_j}^1\big(d\log(-\rho)\big)(\gamma'(t))\,dt\\
          &=\log(-\rho(z_j))\to \infty \quad {\rm as}\;\;  j\to \infty,
      \end{split}
   \end{equation*}
contradicting the boundedness of $\{{\rm length}_{\omega}(\gamma_j)\}_{j\geq 1}$. Therefore the limit of $\{z_j\}_{j\geq 1}$ lies outside $E$, and hence $\omega$ is complete on $X\!\setminus\!E$.
\end{proof}

\begin{remark}
If we denote by $d_{\omega}$ and $d_{\omega_0}$ the distance functions on $X\!\setminus\!E$ and $X$ associated to $\omega$ and $\omega_0$, respectively, and by $\chi_{U\setminus E}$  the characteristic function of $U\!\setminus\!E$, then the preceding argument actually shows the estimate
  $$
  d_{\omega}(z, z_0)\geq \max\Big\{d_{\omega_0}(z,z_0),\, \chi_{U\setminus E}\log(-\rho(z))/\sqrt{2}\Big\},\quad z\in X\!\setminus\!E,\, z_0\in X\!\setminus\!\overline{U}.
  $$
This, of course, implies the completeness of $\omega$.
\end{remark}

We are now ready to prove Theorem $\ref{thm:Hartogs_PSH-domain}$.

\begin{proof}[Proof of Theorem $\ref{thm:Hartogs_PSH-domain}$]

The uniqueness of the extension is clear, since two psh functions on $\Omega$ which coincide almost everywhere are actually equal everywhere. So it suffices to prove the  existence of the extension.

We first observe that the problem can be reduced to the case when $\Omega\supset K$ is a bounded pseudoconvex domain. To see this, let $\rho$ be a psh function on $\mathbb C^n$, continuous on $\mathbb C^n\!\setminus\!K$ and satisfying $\rho^{-1}(-\infty)=K$ (see Theorem \ref{thm:def-complete-polar}). Choose an open set $U\subset \mathbb C^n$ such that $K\subset U\subset\subset \Omega$, and set
  $$
  \widetilde{\rho}:=
  \left\{
  \begin{array}{lll}
  \!\!\max\big\{\rho,\, \inf\limits_{\partial U}\rho\big\} \quad \mbox{on}\;\; \mathbb C^n\!\setminus\!\overline{U}; \\
  \!\!\qquad  \quad  \rho\quad  \quad  \quad \;\;\,\mbox{on}\;\;\overline{U}.
  \end{array}
  \right.
  $$
Then  $\widetilde{\rho}$ is a psh function on $\mathbb C^n$ with $\widetilde{\rho}^{-1}(-\infty)=K$.
Replacing $\Omega$ by any connected component of $\{\widetilde{\rho}<\inf\limits_{\partial U}\rho\}$, we may assume in what follows that $\Omega$ is a bounded pseudoconvex domain in $\mathbb C^n$.

Let $\varphi\in PSH(\Omega\!\setminus\!K)$. To prove the plurisubharmonic extendibility of $\varphi$ across $K$, it suffices to show that every point of $K$ admits a small neighborhood on which $\varphi$ is bounded above. Observe that $\Omega\!\setminus\!K$ is a bounded complete K\"{a}hler domain, in view of Theorem \ref{thm:complete-Kahler}. We can therefore invoke \cite[Theorem 1.3]{CWW} to assign to every point $z\in\Omega\!\setminus\!K$ a holomorphic function $f_z$ on $\Omega\!\setminus\!K$ with the property that $f_z(z)=e^{\varphi(z)/2}$ and
\begin{equation}\label{ineq:norm-estimate}
  \int_{\Omega\setminus K}|f_z|^2e^{-\varphi}\leq {\rm const}_{n,\, {\rm diam}\, \Omega}.
\end{equation}
By the  Hartogs extension theorem for holomorphic functions, each such $f_z$ extends holomorphically to $\Omega$. With a slight abuse of notation, we denote the extension still by $f_z$.

To proceed with the proof we make use of a result of Shiffman. Fix an arbitrary point $z_0\in K$ and recall that being a polar subset of $\mathbb C^n\cong \mathbb R^{2n}$, $K$ has Hausdorff dimension at most $2n-2$ (see, e.g., \cite[Theorem 5.9.6]{ArGa_potential}). Then according to \cite[Lemma 2.3]{Harvey_Survey} or  \cite[Chapter 3, Lemma 4.7]{Demaillybook}), by suitably selecting affine linear coordinates for $\mathbb C^n$, we can find a polydisc $\Delta'\times\Delta''\subset \mathbb C^{n-1}\times\mathbb C$ centered at $z_0=:(z'_0, z''_0)$ such that
  $$
  (\Delta'\times\partial\Delta'')\cap K=\emptyset.
  $$
By shrinking $\Delta'$ if necessary, we further arrive at
  $$
  \big(\Delta'\times (\Delta''\!\setminus\!(1-\varepsilon)\overline{\Delta''})\big)\cap K=\emptyset
  $$
for some sufficiently small $\varepsilon>0$. The remaining argument is the same as in the last paragraph of the proof of \cite[Theorem 1.2]{CWW}. Choose $R,\, r>0$ such that $1-\varepsilon<r<R<1$ and ball $B\subset\subset\Delta'$ centered at $z'_0$. Then the Cauchy estimate and inequality \eqref{ineq:norm-estimate} imply
\begin{equation*}
\begin{split}
e^{\varphi(z)}=|f_z(z)|^2
&\leq {\rm const}_{B,\, R,\, r,\, \varepsilon}\int_{B\times (R\Delta''\setminus r\Delta'')}|f_z|^2\\
&\leq {\rm const}_{B,\, R,\, r,\, \varepsilon}\sup_{B\times (R\Delta''\setminus r\Delta'')}e^\varphi
      \int_{B\times (R\Delta''\setminus r\Delta'')}|f_z|^2e^{-\varphi}\\
&\leq C\sup_{B\times (R\Delta''\setminus r\Delta'')}e^\varphi
\end{split}
\end{equation*}
for all $z\in \big((1-\varepsilon)(B\times\Delta'')\big)\setminus K$, where $C>0$ is a constant independent of $z$. Consequently, $\varphi$ is bounded above on $(1-\varepsilon)(B\times\Delta'')\ni z_0$ outside $K$. This completes the proof.
\end{proof}

\section{Proof of Theorem $\ref{thm:Hartogs_PSH-manifold}$}\label{sect:Hartogs_PSH-manifold}

The proof involves the concept and basic properties of closed positive currents, for which we refer the reader to Demailly's book \cite{Demaillybook}. We start with the following result, which is essentially due to Sibony \cite{Sibony85}.

\begin{proposition}\label{prop:finite-mass}
Let $\Omega$ be a domain in a Stein manifold of dimension $n\geq 2$, and $K\subset \Omega$ be a compact complete pluripolar set. Then every closed positive $(p, p)$-current on $\Omega\!\setminus\!K$ with $1\leq p\leq n-1$ has finite mass near $K$.
\end{proposition}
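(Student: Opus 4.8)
\emph{The plan} is to reduce, via Col\c{t}oiu's theorem, to a concrete model and then extract the mass bound from a Chern--Levine--Nirenberg type integration by parts performed against cut-off functions manufactured from a defining function of $K$.

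\emph{Reduction.} Since $X$ is Stein and $K$ is closed complete pluripolar, Theorem~\ref{thm:def-complete-polar} provides $\rho\in PSH(X)\cap C^{\infty}(X\!\setminus\!K)$ with $\rho^{-1}(-\infty)=K$ and $\sqrt{-1}\partial\bar{\partial}\rho>0$ on $X\!\setminus\!K$. As in the reduction carried out in the proof of Theorem~\ref{thm:Hartogs_PSH-domain} (replace $\rho$ by a truncation that is unchanged near $K$ but bounded below away from it), we may assume there is a neighbourhood $V\Subset X$ of $K$ on which $\rho<-1$ and whose sublevel sets $V_{c}:=\{\rho<-c\}$, $c\geq1$, form a neighbourhood basis of $K$ with $\overline{V_{c}}\Subset V$. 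Fix a K\"ahler form $\beta$ on $X$ (available since $X$ is Stein); as only finiteness is at stake we may take $\beta=\sqrt{-1}\partial\bar{\partial}\varphi$ on a neighbourhood of $\overline{V}$ for a smooth strictly psh $\varphi$ bounded there. Since a closed positive current is an honest current on $V\!\setminus\!K$, it has finite mass over every compact subset of $V\!\setminus\!K$, so it suffices to bound $\int_{V_{c}\!\setminus\!K}T\wedge\beta^{n-p}$ for a single $c$.

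\emph{The estimate.} Introduce the bounded psh function $w:=1/(-\rho)$ on $V$: one has $0<w<1$, $w\in C^{\infty}(V\!\setminus\!K)$, $w$ extends continuously by $0$ across $K$, and $\sqrt{-1}\partial\bar{\partial}w\geq(-\rho)^{-2}\sqrt{-1}\partial\bar{\partial}\rho>0$ on $V\!\setminus\!K$. Fix levels $0<a_{2}<a_{1}<1$, so the sets $\{w<a_{j}\}$ are of the form $V_{c}$ and give a decreasing chain of relatively compact neighbourhoods of $K$. Choose $\theta\in C^{\infty}(\mathbb R)$ with $\theta\equiv1$ on $(-\infty,a_{2}]$ and $\theta\equiv0$ on $[a_{1},\infty)$; then $\theta\circ w$ is a smooth function on $V$, equal to $1$ near $K$ and supported in $\{w<a_{1}\}$. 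Starting from $\int(\theta\circ w)\,T\wedge\beta^{n-p}=\int(\theta\circ w)\,T\wedge\sqrt{-1}\partial\bar{\partial}\varphi\wedge\beta^{n-p-1}$ and transferring $\sqrt{-1}\partial\bar{\partial}$ off $\varphi$ onto $\theta\circ w$ (legitimate since $T$ is closed), one should bound $\int_{\{w<a_{2}\}\!\setminus\!K}T\wedge\beta^{n-p}\leq\int(\theta\circ w)\,T\wedge\beta^{n-p}$ by an integral of $\varphi\,T\wedge\beta^{n-p-1}$ against $\sqrt{-1}\partial\bar{\partial}(\theta\circ w)$, which is supported in the compact shell $\{a_{2}\leq w\leq a_{1}\}\Subset V\!\setminus\!K$ and dominated there by $C\beta$; that integral is then finite and one concludes.

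\emph{Main obstacle.} The genuine difficulty is the legitimacy of the integration by parts across $K$: as $\theta\circ w$ is the constant $1$ near $K$ while $\varphi$ is not, a careful application of Stokes' theorem on $\{w>\varepsilon\}\cap V$ leaves, besides the harmless shell integral, a boundary term over $\{w=\varepsilon\}$ built from the first derivatives of $\varphi$ and the current $T\wedge\beta^{n-p-1}$, which one must show stays bounded (indeed tends to $0$) as $\varepsilon\to0^{+}$. This is precisely where the completeness and compactness of $K$ enter: one should estimate that boundary term by replacing $\varphi$ by a quantity built from $\rho$ and invoking the self-bounded-gradient inequality $\sqrt{-1}\partial\bar{\partial}\big(-\log(-\rho)\big)\geq\sqrt{-1}\partial\log(-\rho)\wedge\bar{\partial}\log(-\rho)$ recorded in Section~\ref{sect:Hartogs_PSH-domain}, which forces the relevant $\sqrt{-1}\partial\bar{\partial}$-quantities to decay towards $K$, the compactness keeping everything inside $V\Subset X$; a bootstrap over $p$ may also be needed. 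Finally, the hypothesis $p\leq n-1$ is essential and visible here: it is exactly closedness of $T$, hence the availability of Stokes' theorem, that excludes the counterexamples present for positive measures, i.e.\ for $p=n$.
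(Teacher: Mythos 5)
Your reduction via Col\c{t}oiu's theorem and your identification of where the difficulty sits are both on target, but the argument has a genuine gap exactly at the point you label the ``main obstacle'': the integration by parts is never justified, and with the cutoff you chose it cannot be justified by the tools you invoke. Since $\theta\circ w\equiv 1$ and $d(\theta\circ w)=0$ on $\{w<a_2\}$, the Stokes computation on $\{w>\varepsilon\}$ leaves a boundary term over $\{w=\varepsilon\}$ of the form $\int_{\{w=\varepsilon\}}T\wedge(\text{first derivatives of }\varphi)\wedge\beta^{n-p-1}$, with no small factor attached. To select good levels $\varepsilon_j\to0$ along which this stays bounded (say by coarea and Cauchy--Schwarz), one needs precisely the finiteness of $\int_{\{0<w<a_2\}}T\wedge\beta^{n-p}$ that is being proved, so the argument is circular. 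The self-bounded-gradient inequality for $-\log(-\rho)$ controls $\sqrt{-1}\,\partial\log(-\rho)\wedge\bar\partial\log(-\rho)$ only after pairing with a positive measure built from $T$ that you do not yet control, and since $\theta$ is decreasing, $\theta\circ w$ is neither psh nor anti-psh, so no sign trick rescues the computation either.

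The standard resolution --- and, in substance, what the paper does --- is to turn the cutoff the other way around: use a \emph{psh} cutoff \emph{vanishing on} $K$, namely $\rho_t:=\max\{\rho/t+1,0\}$, or the relative extremal function $u_K$ of $K$ in a strongly pseudoconvex neighbourhood $\Omega\Subset X$. The paper first quotes Sibony's Proposition 1.4: for an arbitrary compact $K\subset\Omega$ (no pluripolarity needed) the current $u_KT$ has finite mass near $K$; the proof of that proposition is the Chern--Levine--Nirenberg-type estimate you are aiming at, made to work because the factor $u_K$, which vanishes on $K$, kills the dangerous boundary contributions. Complete pluripolarity then enters only in a soft final step: $\rho_t\le u_K$ and $\rho_t\uparrow\chi_{\Omega\setminus K}$ force $u_K=\chi_{\Omega\setminus K}$, whence $u_KT=T$ on $\Omega\setminus K$ and the mass bound transfers to $T$ itself. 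In your scheme complete pluripolarity is asked to produce decay of derivative quantities towards $K$, which it does not provide; as written, you establish finiteness of the shell integral but not of $\int_{\{w<a_2\}\setminus K}T\wedge\beta^{n-p}$. (A minor point: for $p=n$ every $(n,n)$-current is closed, so what fails there is not closedness but the absence of a $(1,1)$-slot to integrate by parts into.)
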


\begin{proof}
After shrinking $\Omega$ by a technique similar to that at the beginning of the proof of Theorem \ref{thm:Hartogs_PSH-domain}, we may assume that $\Omega$ itself is a strongly pseudoconvex neighborhood of $K$ with $C^{\infty}$ boundary, which is relatively compact in the ambient manifold. Let $u_K$ denote the relative extremal function of $K$ in $\Omega$, that is
  $$
  u_K=\sup\Big\{u\in PSH(\Omega)\cap C(\Omega)\!: u<1\; {\rm on}\; \Omega,\,  u\leq 0  \;{\rm on}\; K\Big\}.
  $$
Clearly $u_K=0$ on $K$. Moreover according to \cite[Proposition 1.4]{Sibony85}\footnote{Strictly speaking, Sibony only considered the case when the ambient manifold of $\Omega$ is $\mathbb C^n$, but his argument works equally well for the more general case of Stein manifolds.}, the product $u_KT$ of $u_K$ and every closed positive $(p, p)$-current $T$ on $\Omega\!\setminus\!K$ with $1\leq p\leq n-1$ has finite mass near $K$. (This is true for all compact sets $K\subset \Omega$, regardless of the complete pluripolarity of $K$.)

It remains to show that $u_K$ is no other than the characteristic function $\chi_{\Omega\setminus K}$ of $\Omega\!\setminus\!K$, provided $K\subset \Omega$ is further assumed to be complete pluripolar. For this let $\rho$ be a negative psh function on $\Omega$, continuous on $\Omega\!\setminus\!K$ and satisfying $\rho^{-1}(-\infty)=K$, and set
  $$
  \rho_t:=\max\big\{\rho/t+1,\, 0\big\},\quad t>0.
  $$
Then $\{\rho_t\}_{t>0}$ forms a family of candidates for the supremum defining $u_K$, hence
$\rho_t\leq u_K$ for all $t>0$. Letting $t\to \infty$ yields $u_K=\chi_{\Omega\setminus K}$, as desired.
\end{proof}

Now we are in a position to prove Theorem $\ref{thm:Hartogs_PSH-manifold}$.

\begin{proof}[Proof of Theorem $\ref{thm:Hartogs_PSH-manifold}$]
As pointed out in the proof of Theorem $\ref{thm:Hartogs_PSH-domain}$, it suffices to prove the existence part of the theorem.

Given a function $\varphi\in PSH(\Omega\!\setminus\!K)$, we consider the associated closed positive $(1, 1)$-current $T:=\sqrt{-1}\partial\bar{\partial}\varphi$ on $\Omega\!\setminus\!K$. By virtue of Proposition \ref{prop:finite-mass} and the Skoda--El Mir extension theorem (see \cite[Th\'{e}or\`{e}me II.1]{El-Mir_cmp-plolar} or \cite{Sibony85, Demaillybook}), the trivial extension $\widetilde{T}$ of $T$ by zero across $K$ is a closed positive $(1, 1)$-current on $\Omega$. Since $H^1(\Omega,\, \mathcal{O})=0$ and $H^2(\Omega,\, \mathbb R)=0$, a standard argument shows that $\widetilde{T}$ admits a global potential $\widetilde{\varphi}\in PSH(\Omega)$, i.e., $\sqrt{-1}\partial\bar{\partial}\widetilde{\varphi}=\widetilde{T}$. One can then write
  $$
  \widetilde{\varphi}=\varphi+h\quad {\rm on}\ \, \Omega\!\setminus\!K
  $$
with $h$ being a pluriharmonic function on $\Omega\!\setminus\!K$, in view of Weyl's lemma. On the other hand, since $\Omega\!\setminus\!K$ is connected by the pluripolarity of $K$, the Hartogs extension theorem for pluriharmonic functions (see \cite[Theorem 1.1]{Wang}) implies that $h$ admits a pluriharmonic extension $\widetilde{h}$ to $\Omega$. It now follows that $\widetilde{\varphi}-\widetilde{h}$ is a psh function on $\Omega$ that extends $\varphi$.
\end{proof}


\begin{thebibliography}{\; Roman}
\bibitem[AG01]{ArGa_potential} D. H. Armitage and S. J. Gardiner, Classical Potential Theory, Springer Monographs in Mathematics, Springer-Verlag London, Ltd., London, 2001.

\bibitem[BT88]{Bed-Tay_subext} E. Bedford and B. A. Taylor, {\it Smooth plurisubharmonic functions without subextension}, Math. Z. {\bf 198} (1988), 331--337.

\bibitem[Che17]{Chen} B.-Y. Chen, {\it  Hardy-Sobolev type inequalities and their applications}, arXiv:1712.02044v2.

\bibitem[CWW15]{CWW} B.-Y. Chen, J. Wu and X. Wang, {\it Ohsawa--Takegoshi type theorem and extension of plurisubharmonic functions}, Math. Ann. {\bf 362} (2015), 305--319.

\bibitem[Col90]{Coltoiu}  M. Col\c{t}oiu, {\it Complete locally pluripolar sets}, J. Reine Angew. Math. {\bf 412} (1990), 108--112.

\bibitem[Dem92]{Demailly92} J.-P. Demailly, {\it Regularization of closed positive currents and intersection theory}, J. Algebraic Geom. {\bf 1} (1992), 361--409.

\bibitem[Dem12]{Demaillybook} J.-P. Demailly, Complex Analytic and Differential Geometry, available  at \url{https://www-fourier.ujf-grenoble.fr/~demailly/books.html}.

\bibitem[DM14]{DM_cmp-plolar} N. Q. Dieu and P. V. Manh, {\it Complete pluripolar graphs in $\mathbb C^N$}, Ann. Polon. Math. {\bf 112} (2014), 85--100.

\bibitem[Edl04]{Edlund_cmp-curv} T. Edlund, {\it Complete pluripolar curves and graphs},  Ann. Polon. Math. {\bf 84} (2004), 75--86.

\bibitem[ElM84]{El-Mir_cmp-plolar} H. El Mir, {\it Sur le prolongement des courants positifs ferm\'{e}s}, Acta Math. {\bf 153} (1984), 1--45.

\bibitem[Gra56]{Grauert_comp-kahler} H. Grauert, {\it Charakterisierung der Holomorphiegebiete durch die vollst\"{a}ndige K\"{a}hlersche Metrik}, Math. Ann. {\bf 131} (1956), 38--75.

\bibitem[Har77]{Harvey_Survey} R. Harvey, {\it Holomorphic chains and their boundaries}, in Several Complex Variables (Proc. Sympos. Pure Math., Vol. XXX, Part 1, Williams Coll., Williamstown, Mass., 1975), pp. 309--382. Amer. Math. Soc., Providence, R.I., 1977.

\bibitem[HP75]{Harvey_CPAM} R. Harvey and J. Polking, {\it Extending analytic objects}, Comm. Pure Appl. Math. {\bf 28} (1975), 701--727.

\bibitem[LMP92]{LMP_IUMJ} N. Levenberg, G. Martin and E. A. Poletsky, {\it Analytic disks and pluripolar sets}, Indiana Univ. Math. J. {\bf 41} (1992), 515--532.

\bibitem[Ohs95]{Ohsawa95} T. Ohsawa, {\it On the extension of $L^2$ holomorphic functions III: negligible weights}, Math. Z. {\bf 219} (1995), 215--225.

\bibitem[Ran95]{Ransfordbook} T. Ransford, Potential Theory in the Complex Plane, London Mathematical Society Student Texts, 28. Cambridge University Press, Cambridge, 1995.

\bibitem[Shi72]{Shiffman72} B. Shiffman, {\it Extension of positive line bundles and meromorphic maps}, Invent. Math. {\bf 15} (1972), 332--347.

\bibitem[Sib85]{Sibony85} N. Sibony, {\it Quelques probl\`{e}mes de prolongement de courants en analyse complexe}, Duke Math. J. {\bf 52} (1985), 157--197.

\bibitem[Siu74]{Siu75} Y.-T. Siu, {\it Analyticity of sets associated to Lelong numbers and the extension of closed positive currents}, Invent. Math. {\bf 27} (1974), 53--156.


\bibitem[Wan22]{Wang} X. Wang, {\it Bott-Chern cohomology and the Hartogs extension theorem for pluriharmonic functions}, Ann. Sc. Norm. Super. Pisa Cl. Sci. (5) {\bf 25} (2024), 605--610.

\end{thebibliography}
\end{document}